\documentclass[12pt]{amsart}
\usepackage{amsmath}
\usepackage{amsfonts,textcomp,longtable,pstricks,amscd}

\textwidth =6.9in \textheight = 9.4in \topmargin= -0.2in
\oddsidemargin = -0.3in \evensidemargin = -0.3in
\setlength{\parskip}{1pt plus 1pt minus 1pt}

%\emergencystretch=30pt \tolerance=1000

\newtheorem*{thm*}{Theorem}

\newtheorem*{lemma*}{Lemma}

\newtheorem*{prop*}{Proposition}
\newtheorem{prop}[subsection]{Proposition }

\newtheorem*{corollary*}{Corollary}

\theoremstyle{definition}

\newtheorem*{rem*}{Remark}

\newtheorem{ex}[subsubsection]{Example }

\numberwithin{equation}{section}

\def\PP{{\mathbb P}}

\def\CC{{\mathbb C}}

\def\ZZ{{\mathbb Z}}

\def\AA{{\mathbb A}}
\def\FF{{\mathbb F}}
\def\fq{{\mathbb{F}_q}}
\def\kk{{\bar{k}}}
\def\aa{{\bar{a}}}
\def\CC{{\overline{C}}}
\def\XX{{\overline{X}}}

\def\et{{{\acute e}t}}
\def \bra#1\ket {\mathop{\vphantom{#1}\left<\smash{#1}\right>}\nolimits}
\def\sep{{\mathrm sep}}
\def\ge{{\mathrm{geom}}}
\def\tame{{\mathrm{tame}}}
\def\wild{{\mathrm{wild}}}

 \DeclareMathOperator{\Hom}{Hom}
\DeclareMathOperator{\Aut}{Aut}

\DeclareMathOperator{\Gal}{Gal} \DeclareMathOperator{\Spec}{Spec}

\DeclareMathOperator{\GL}{GL}

\def\V{\mathcal{V}}
\def\E{\mathcal{E}}

\def\plim{\mathop{{\lim\limits_{\longleftarrow}}}\nolimits}

\renewcommand \phi {\varphi}

\begin{document}
\author{Sergey Rybakov}
\thanks{The research was carried out at the IITP RAS at the expense of the Russian Foundation for Sciences (project $N^{\underline{o}}$ 14-50-00150).}
\address{Institute for information transmission problems of the Russian Academy of Sciences}
%\address{Poncelet laboratory (UMI 2615 of CNRS and Independent University of Moscow)}
%\address{ AG Laboratory, HSE, 7 Vavilova str., Moscow, Russia, 117312 }

\email{rybakov.sergey@gmail.com}%
\title{Families of algebraic varieties and towers of algebraic curves over finite fields}
\date{}
\keywords{optimal tower, finite field}

\subjclass{14D05, 14D10, 14G15}

\bigskip                                         

\begin{abstract}
We introduce a new construction of towers of algebraic curves over finite fields and provide a simple example of an optimal tower.
\end{abstract}

\maketitle
\section{Introduction}
In this paper we use the following notation. Let $k$ be a field of positive characteristic $p$.
We fix an algebraic closure $\kk$ of $k$ and denote by $\overline{Y}$ the base change $$Y\times_{\Spec k}\Spec\kk$$ of an algebraic variety $Y$ over $k$.
We also fix a prime number $\ell\neq p$. 

A \emph{tower of algebraic curves} over $k$ is an infinite sequence 
$$\dots C_n\to C_{n-1}\to\dots\to C_0$$ of smooth projective and geometrically connected curves and finite morphisms. 
We assume that the genus $g(C_n)$ is unbounded. If $k=\fq$ is a finite field,
the number of points $|C_n(k)|$ on the curve $C_n$ is defined, and
the limit $$\beta(C_\bullet)=\lim_{n\to\infty}\frac{|C_n(k)|}{g(C_n)}$$ exists. Moreover, by the Drinfeld-Vl\v{a}du\c{t} theorem~\cite[3.2.1]{TsVN}, $\beta(C_\bullet)\leq\sqrt{q}-1$. 
The tower is called \emph{optimal}, if $\beta(C_\bullet)=\sqrt{q}-1$. 
It is known that if $q$ is a square, examples of optimal towers over $\FF_q$ can be constructed as modular towers (see ~\cite{TsVZ}, and~\cite{I}), 
but in other cases, all known towers are far from being optimal. 

In this paper, we introduce a new construction of towers of algebraic curves over finite fields. 
We begin with a family $f:X\to C$ of algebraic varieties over a projective curve $C$.
Assume that $f$ is smooth over an open subset $U\subset C$. 
The $i$-th derived \'etale direct image of the constant sheaf $\ZZ/\ell^n \ZZ$ corresponds to a local system $\V_n$ on $U$. 
There is a fiberwise \emph{projectivisation} $P_n(\V_n)$ of this local system (see section~\ref{s2}), which is an \'etale scheme $U_n$ over $U$. 
Clearly, $U_n$ is a regular scheme of dimension one, and there exists a regular compactification $C_n$ of $U_n$. 
If $a\in C(\kk)$ is a closed point such that the fiber $X_a$ of $X$ over $a$ is smooth, then the set of $\kk$--points on the fiber of $C_n$ over $a$ is
canonically isomorphic to the quotient $$H_\et^i(\XX_a,\ZZ/\ell^n \ZZ)^*/(\ZZ/\ell^n \ZZ)^*.$$ In particular, this isomorphism respects Frobenius action if $a\in C(k)$.
We prove that under some strong technical conditions on the family $X$ the scheme $C_n$ is a geometrically irreducible algebraic curve. 
As an example, we construct \emph{the Legendre tower} starting from the Legendre family of elliptic curves over $\FF_{p^2}$, and prove that it is an optimal tower. 

Our construction is related to modular towers as follows. Recall that points on the curve $X_0(\ell^n)$ correspond to
isomorphism classes of elliptic curves $E$ with a cyclic subgroup in $E(\kk)$ of order $\ell^n$. 
In fact, this curve is defined over $\FF_{p^2}$ (and even over $\FF_p$), and the family $X_0(\ell^\bullet)$ is an optimal tower over $\FF_{p^2}$. 
An elliptic curve $E$ over $\kk$ defines a point on $X_0(1)$, and the fiber of $X_0(\ell^n)$ over this point is isomorphic to the projectivisation 
$P_n(E[\ell^n](\kk))$ of the $\ell^n$--torsion. 
Let $\E\to\PP^1$ be a family of elliptic curves such that its $j$--invariant is a map of degree $1$. 
We expect that, for an appropriate family $\E$, our construction gives the modular tower $X_0(\ell^\bullet)$, 
and the Legendre tower is a base change of $X_0(\ell^\bullet)$.

One of the conditions on the family is connected with supersingularity of the smooth fibers. We say that a smooth variety $Y$ over $k$ is 
\emph{strongly supersingular in degree $i$} if the Frobenius action on $H^i_\et(\overline{Y},\ZZ_\ell)$ is multiplication by $q^{i/2}$ or $-q^{i/2}$.  
Clearly, if $Y$ is strongly supersingular, then it is supersingular in the usual sense. 
Moreover, we show that if the fiber $X_a$ over $a\in C(k)$ is strongly supersingular, then the fiber of $C_n$ over $a$ is split for all $n$.   

We hope that our construction will help to find towers with good asymptotic properties over $\FF_p$. 
Note that the eigenvalue $\pm p^{i/2}$ of the Frobenius action on $H^i_\et(\overline{Y},\ZZ_\ell)$ is integral only if $i$ is even. 
In particular, if $Y$ is strongly supersingular in degree $i$ over $\FF_p$, then
$i$ is even. This observation forces us to study families of algebraic varieties of dimension at least two.

We discuss here the case $i=2$ and $\dim X=3$, i.e., $f:X\to C$ is a family of algebraic surfaces.                                                          
One of the most important restrictions on the family comes from the monodromy representation. 
Namely, the monodromy group has to be infinite. 
It follows that families of rational surfaces do not lead to interesting towers, 
because the image of the fundamental group in the automorphism group of the Neron--Severi group of a surface is finite. 
On the other hand, the example of Legendre family shows that towers assigned to families of abelian varieties 
are closely related to known modular towers, and these towers are non-optimal over $\FF_p$.
Therefore it is natural to consider algebraic surfaces that are far from being rational or abelian.
This will be the object of our future research.

\section{The construction}\label{s2}
\subsection{} Denote the ring $\ZZ/\ell^n\ZZ$ by $\Lambda_n$.
Let $V$ be a finitely generated $\Lambda_n$--module. The set $$V^*=\{v\in V|\ell^{n-1}v\neq 0\}$$ has the natural action of the group of invertible elements $\Lambda^*_n$.
We say that the set $$P_n(V)=V^*/\Lambda_n^*$$ is \emph{the projectivisation of $V$}. 
For example, $P_1$ is the usual projectivisation of an $\FF_\ell$-vector space. Recall that the cardinality of the set $P_1(\Lambda_1^{b})$ is equal to $$c_\ell(b)=\frac{\ell^{b}-1}{\ell-1}.$$

\begin{lemma*}
Suppose that $V_n$ is a $\Lambda_n$--module, and $V_{n-1}$ is a $\Lambda_{n-1}$--module. The group $V_{n-1}$ is a $\Lambda_n$--module in an obvious way. 
Let $\phi:V_n\to V_{n-1}$ be a homomorphism such that $\ell\ker\phi=0$.
Then there is an induced map $$P(\phi): P_n(V_n)\to P_{n-1}(V_{n-1}).$$ 
\end{lemma*}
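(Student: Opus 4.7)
The plan is to define $P(\phi)$ on representatives by the obvious formula $P(\phi)([v])=[\phi(v)]$ for $v\in V_n^*$, and then verify the two things this requires: that $\phi$ sends $V_n^*$ into $V_{n-1}^*$, and that the resulting map descends to $\Lambda_n^*$-orbits.

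First I would check the inclusion $\phi(V_n^*)\subset V_{n-1}^*$, which is the only place where the hypothesis $\ell\ker\phi=0$ is actually used. Suppose $v\in V_n^*$ but $\phi(v)\notin V_{n-1}^*$, i.e.\ $\ell^{n-2}\phi(v)=0$. Then $\ell^{n-2}v\in\ker\phi$, so by hypothesis $\ell\cdot\ell^{n-2}v=\ell^{n-1}v=0$, contradicting $v\in V_n^*$. Hence $\phi(v)\in V_{n-1}^*$ whenever $v\in V_n^*$.

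Next I would check well-definedness on orbits. The reduction map $\Lambda_n\to\Lambda_{n-1}$ sends units to units, giving a surjection $\Lambda_n^*\twoheadrightarrow\Lambda_{n-1}^*$; let $\bar\lambda\in\Lambda_{n-1}^*$ denote the image of $\lambda\in\Lambda_n^*$. Since the $\Lambda_n$-action on $V_{n-1}$ factors through $\Lambda_{n-1}$, and $\phi$ is $\Lambda_n$-linear, one has $\phi(\lambda v)=\lambda\phi(v)=\bar\lambda\phi(v)$ in $V_{n-1}$. Thus $\phi(v)$ and $\phi(\lambda v)$ lie in the same $\Lambda_{n-1}^*$-orbit, so $[\phi(v)]=[\phi(\lambda v)]$ in $P_{n-1}(V_{n-1})$, and $P(\phi)$ is well defined.

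I don't anticipate a real obstacle here; the only subtle point is that the hypothesis $\ell\ker\phi=0$ is used precisely to guarantee that elements not killed by $\ell^{n-1}$ map to elements not killed by $\ell^{n-2}$. If one only assumed $\phi$ to be a homomorphism, an element of $V_n^*$ could easily map into the kernel's ``low-order'' part of $V_{n-1}$ and thereby fall out of $V_{n-1}^*$, so no induced map on projectivisations would exist.
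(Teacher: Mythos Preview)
Your argument is correct and follows exactly the same approach as the paper's proof: show that $\phi$ carries $V_n^*$ into $V_{n-1}^*$ using the hypothesis $\ell\ker\phi=0$, and then observe compatibility with the multiplicative group actions. The paper is simply terser, asserting $\ell^{n-2}\phi(v)\neq 0$ and compatibility in one line, whereas you have spelled out the contrapositive and the orbit check explicitly.
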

\begin{proof}
Let $v\in V_n^*$. Since $\ell\ker\phi=0$, we have $\ell^{n-2}\phi(v)\neq 0$. This gives a morphism $\phi^*:V_n^*\to V_{n-1}^*$ which is compatible with the actions of multiplicative groups. 
\end{proof}

For example, the natural projection $\Lambda_n^b\to \Lambda_{n-1}^b$ induces a morphism $$P_n(\Lambda_n^b)\to P_{n-1}(\Lambda_{n-1}^b)$$ of degree $\ell^{b-1}$.
It follows that the cardinality of  $P_n(\Lambda_n^b)$ is $c_\ell(b)\ell^{(b-1)(n-1)}$.

\subsection{The general case.}\label{constr} Let $U$ be a smooth curve over $k$, and let $\V_\bullet$ be a projective system of locally constant sheaves 
$\V_n$ of free $\Lambda_n$--modules on the small \'etale site of $U$. We assume that the rank $b$ of $\V_n$ does not depend on $n$. 
By~\cite[V.1.1]{Milne}, the functor $\V_n^*$ given by 
$$T\mapsto \V_n(T)^*$$ is representable by a finite \'etale scheme over $U$ with a free action of the group $\Lambda_n^*$. 
Thus, the quotient by this action represents the sheaf $\Tilde{U}_n=P_n(\V_n)$. 

\begin{thm*} 
The sheaf $\Tilde{U}_n=P_n(\V_n))$ is representable by a finite \'etale scheme $U_n$ over $U$. 
\end{thm*}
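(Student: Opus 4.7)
The plan is to realize $U_n$ as the quotient of the finite \'etale scheme $\V_n^*$ by the free action of the finite group $\Lambda_n^*$, and then verify that this geometric quotient represents the sheaf $P_n(\V_n)$.

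First I would exploit that, by the representability result already quoted from \cite[V.1.1]{Milne}, $\V_n^*$ is a finite \'etale scheme over $U$, hence in particular affine over $U$. Covering $U$ by affine opens $\Spec R$ and taking preimages produces affine $\Lambda_n^*$-stable opens $\Spec A$ of $\V_n^*$, each finite \'etale over the corresponding $\Spec R$. The standard theory of quotients by free actions of a finite group on an affine scheme (Milne, \'Etale cohomology, Ch.~V) gives that $\Spec A^{\Lambda_n^*} \to \Spec R$ is again finite \'etale; the construction is compatible with further localization, so these local quotients glue to a finite \'etale scheme $U_n$ over $U$ together with a finite \'etale surjection $\V_n^* \to U_n$.

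Next I would identify $U_n$ with the sheaf $\tilde U_n = P_n(\V_n)$. Since $\V_n$ is locally constant of constant rank $b$, there is an \'etale cover $U' \to U$ trivializing $\V_n$ to $\Lambda_n^b$. Over $U'$ the scheme $\V_n^*$ becomes the constant scheme $U' \times (\Lambda_n^b)^*$, on which $\Lambda_n^*$ acts by scalar multiplication in the second factor, and its quotient is the constant scheme $U' \times P_n(\Lambda_n^b)$. The same trivialization identifies the sheaf $P_n(\V_n)$ over $U'$ with the constant sheaf $P_n(\Lambda_n^b)$, and the descent data (given by the monodromy action of $\pi_1^{\et}(U)$ on $\Lambda_n^b$, which commutes with scalar multiplication) agree on both sides. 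Descent then gives a canonical isomorphism $U_n \cong \tilde U_n$ as sheaves on the small \'etale site of $U$.

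The main obstacle I expect is the sheafification subtlety: the naive presheaf $T \mapsto \V_n(T)^*/\Lambda_n^*$ is not in general a sheaf, because a $\Lambda_n^*$-orbit that is a single element in the scheme $U_n$ can split into several orbits in $\V_n^*(T)/\Lambda_n^*$ for small $T$. One must interpret $P_n(\V_n)$ as the associated sheaf and be careful that the geometric quotient $U_n$ represents this sheafification, not the presheaf quotient. The \'etale-local comparison above is exactly what settles this point, since both objects become canonically isomorphic to the constant sheaf $P_n(\Lambda_n^b)$ after pullback to a trivializing cover.
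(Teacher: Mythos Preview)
Your proposal is correct and follows exactly the paper's approach: the paper's justification (given in the two sentences immediately preceding the theorem statement) is simply that $\V_n^*$ is representable by a finite \'etale $U$-scheme with a free $\Lambda_n^*$-action, and the quotient represents $P_n(\V_n)$. You have supplied the details the paper leaves implicit---the affine-local construction of the quotient, the \'etale-local trivialization and descent, and the sheafification caveat---but the underlying argument is the same.
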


Let $C_0$ be a smooth projectivisation of $U$, and let $C_n$ be a regular projectivisation of $U_n$. 
Note that every connected component $Y$ of $C_n$ is a smooth algebraic curve over the algebraic closure of $k$ in $k(Y)$.

\begin{lemma*}%\label{fiber}
The fiber of $C_n$ over $\aa\in U(\kk)$ is naturally isomorphic to $P_n((\V_n)_\aa)$.
\end{lemma*}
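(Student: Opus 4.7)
The plan is to exploit the representability established in the preceding theorem and the fact that geometric fibers of étale morphisms are computed by taking stalks of the representing sheaf. The main point is simply to verify that taking the stalk at $\bar{a}$ commutes with the projectivisation operation $P_n$.

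First, I would observe that since $\bar{a}\in U(\kk)$ lies in the open locus where $C_n$ and $U_n$ agree, the fiber of $C_n$ over $\bar{a}$ equals the fiber of $U_n$ over $\bar{a}$. Because $U_n\to U$ is finite étale by the theorem, this fiber is a finite disjoint union of copies of $\Spec\kk$, so as a scheme it is determined by its set of $\kk$--points.

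Second, by the representability of $\Tilde{U}_n=P_n(\V_n)$ I would identify
\[
(U_n)_{\bar{a}}(\kk)=\Hom_U(\Spec\kk,U_n)=\Tilde{U}_n(\bar{a})=(P_n(\V_n))_{\bar{a}},
\]
where the last term is the stalk at the geometric point $\bar{a}$. It then remains to produce a natural bijection
\[
(P_n(\V_n))_{\bar{a}}\;\cong\;P_n\bigl((\V_n)_{\bar{a}}\bigr).
\]
For this I would work étale-locally on $U$: the sheaf $\V_n$ is locally constant of rank $b$, so on a sufficiently small étale neighborhood $V\to U$ of $\bar{a}$ it becomes isomorphic to the constant sheaf $\Lambda_n^b$. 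Over such a trivialising neighbourhood $\V_n^*$ is constant with value $(\Lambda_n^b)^*$, the $\Lambda_n^*$--action is the obvious free one, and the quotient sheaf is represented by the constant scheme $V\times P_n(\Lambda_n^b)$. Taking the stalk at $\bar{a}$ then yields $P_n(\Lambda_n^b)$ on the left and $P_n((\V_n)_{\bar{a}})=P_n(\Lambda_n^b)$ on the right, and the identification is visibly independent of the trivialisation.

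The only genuine subtlety is the commutation of the stalk functor with the passage from $\V_n^*$ to its quotient by $\Lambda_n^*$; this is not automatic for arbitrary sheaf quotients, but here the action is free and by a finite group, and the quotient has already been described explicitly in the proof of the theorem as the scheme-theoretic quotient $\V_n^*/\Lambda_n^*$, which commutes with base change to the geometric point. Everything else is bookkeeping.
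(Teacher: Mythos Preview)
Your proposal is correct and follows essentially the same path as the paper. The paper's proof is a two-line sketch: it invokes \cite[V.1.7(c)]{Milne} to say that $U_n$ represents the pullback of $P_n(\V_n)$ to the big \'etale site (so that one may evaluate at the geometric point $\bar a$), and then says ``untwisting the definitions'' for precisely what you spell out explicitly---the commutation of the stalk functor with $P_n$ via a local trivialisation of $\V_n$.
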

\begin{proof}
By~\cite[V.1.7(c)]{Milne}, the scheme $U_n$ represents the sheaf $g^*\V_n$, where $g$ is the tautological morphism from the 
big \'etale site to the small \'etale site of $U$. Untwisting the definitions we get the lemma.
\end{proof}

\begin{corollary*}
The degree of $C_1$ over $C_0$ is equal to $c_\ell(b)$.
If $n>1$, then the natural projection $\V_n\to \V_{n-1}$ induces a morphism $C_n\to C_{n-1}$ of degree $\ell^{b-1}$.
\end{corollary*}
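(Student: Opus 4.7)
The plan is to compute each degree on a single geometric fiber via the Lemma just proved, and then to invoke the standard fact that a finite morphism between regular projective curves with connected base has degree equal to the cardinality of any geometric fiber in its \'etale locus.

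First I would pick $\aa\in U(\kk)$. Since $\V_n$ is a locally constant sheaf of free $\Lambda_n$--modules of rank $b$, the stalk $(\V_n)_\aa$ is (non-canonically) isomorphic to $\Lambda_n^b$. By the Lemma, the geometric fiber of $C_n\to C_0$ over $\aa$ is in bijection with $P_n(\Lambda_n^b)$, whose cardinality $c_\ell(b)\ell^{(b-1)(n-1)}$ was computed just before Subsection~\ref{constr}. Specialising to $n=1$ gives the first assertion. For $C_n\to C_{n-1}$ with $n>1$ I would take the ratio
$$\frac{c_\ell(b)\ell^{(b-1)(n-1)}}{c_\ell(b)\ell^{(b-1)(n-2)}}=\ell^{b-1},$$
or, more directly, identify the map of fibers at $\aa$ with the map $P_n(\Lambda_n^b)\to P_{n-1}(\Lambda_{n-1}^b)$ appearing in the example, which already has degree $\ell^{b-1}$. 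Functoriality of the projectivisation construction applied stalkwise, combined with the first Lemma of Section~\ref{s2} (which tells us $P(\phi)$ is defined when $\ell\ker\phi=0$, a condition trivially met by the reduction $\V_n\to\V_{n-1}$), shows that this is genuinely the fiber map induced by $C_n\to C_{n-1}$.

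The only step that is not purely a count is the passage from $U$ to $C_0$. The morphism $U_n\to U$ is finite \'etale by the Theorem, and $C_n\to C_0$ is its extension to regular projective models; since $C_0$ is connected and smooth, the degree of this finite morphism is constant on $C_0$ and hence agrees with the cardinality of any geometric fiber over the dense open $U$. The same reasoning, applied with $C_{n-1}$ in place of $C_0$ and the \'etale locus $U_{n-1}\subset C_{n-1}$, handles the relative morphism $C_n\to C_{n-1}$. I do not expect any real obstacle beyond this routine verification; the conceptual content has already been packaged into the Theorem and the Lemma.
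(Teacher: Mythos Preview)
Your proposal is correct and follows exactly the route the paper intends: the Corollary is stated without proof in the paper, being an immediate consequence of the preceding Lemma (fibers over $U$ are the projectivisations of stalks) together with the cardinality computation $|P_n(\Lambda_n^b)|=c_\ell(b)\ell^{(b-1)(n-1)}$ and the example showing $P_n(\Lambda_n^b)\to P_{n-1}(\Lambda_{n-1}^b)$ has fibers of size $\ell^{b-1}$. Your added remarks on why the degree computed over $U$ agrees with the degree of the projective extension $C_n\to C_0$ are routine but welcome, since the paper leaves this unspoken.
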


\subsection{The case of a generically smooth family of algebraic varieties.}\label{family}
Let $f:X\to C$ be a family of algebraic varieties. We assume that $f$ is smooth over $U\subset C$, and
denote by $f_U:X_U\to U$ the corresponding smooth family.
By~\cite[VI.4.2]{Milne}, we have a locally constant finite sheaf of $\Lambda_n$--modules on $U$: $$\V_n=R_\et^if_{U*}(\Lambda_n),$$
where the direct image is taken with respect to the small \'etale site of $U$. 

A point $\Bar a\in U(\kk)$ corresponds to a morphism $\Spec\kk\to U$.
By~\cite[VI.2.5]{Milne}, the stalk $(\V_n)_{\Bar a}$ of $\V_n$ is isomorphic to $H^i_\et(\XX_{\Bar a},\Lambda_n)$. 
In what follows, we assume that the module $H^i_\et(\XX_{\Bar a},\ZZ_\ell)$ is free, and denote the corresponding Betti number by $b$. 

As in Section~\ref{constr}, we construct a tower of schemes $C_n$ starting from $\V_n$. 
We are going to discuss under what conditions on the family $f$ the tower $C_n$ is a tower of (geometrically connected) algebraic curves.

\subsection{The fundamental group of $U$.}\label{Pi1} Recall some basic properties of the fundamental group of a $k$-scheme $Z$~\cite[I.5]{Milne}.
 Let $K$ be a separably closed field. 
Suppose that there is a morphism $\Bar a:\Spec K\to Z$. 
The fundamental group $\pi_1(Z,\Bar a)$ is the automorphism group of the functor from finite \'etale $Z$-schemes to sets $$T\mapsto \Hom_Z(\Spec K,T).$$
This is a profinite group, and for any other point $\Bar a'$ there exists an isomorphism of profinite groups $i_{\Bar{aa}'}:\pi_1(Z,\Bar a)\to \pi_1(Z,\Bar a')$.
This isomorphism is unique up to inner automorphism of $\pi_1(Z,\Bar a)$.

\begin{ex}\label{I52}
By~\cite[I.5.2(a)]{Milne}, if $Z=\Spec L$ is the spectrum of a field, then it has only one point $x$, and  $\pi_1(Z,\Bar x)$ is the Galois group $\Gal(L)$ of a separable closure $L^\sep$ of $L$.
The functors $L'\mapsto \Hom_L (L',L^\sep)$ and $$S\mapsto (\prod_{s\in S}L^\sep)^{\Gal(L)}$$ establish an equivalence between
the category of \'etale extensions of $\Spec L$ and the category of finite $\Gal(L)$-sets. Moreover, finite field extensions of $L$ correspond to transitive actions of $\Gal(L)$ on finite sets.
\end{ex}

A morphism of $k$-schemes $\phi:Z\to Y$ induces a homomorphism of fundamental groups $$\pi_1(Z, \bar a)\to\pi_1(Y,\phi(\Bar a)).$$
For example, let $a$ be a closed point on $U$; in other words we have a morphism $a:\Spec k(a)\to U$.
Choose an embedding $k(a)\to k(a)^\sep$ to a separable closure of $k(a)$. This embedding defines a morphism  $\bar a:\Spec k(a)^\sep\to U$.
It follows that there is a homomorphism $\pi_1(U,\Bar a)\to \Gal(k)$. If $k$ is a finite field, then $\Gal(k)\cong \widehat\ZZ$ is topologically generated by Frobenius.
In this case we have an exact sequence of profinite groups $$1\to G^\ge\to \pi_1(U,\Bar a)\to\widehat\ZZ\to 1,$$ 
where $G^\ge=\pi_1(\overline{U},\Bar a)$ is the geometric part of the fundamental group.

Let $\eta=\Spec k(C)$ be the generic point of $C$. Then there is a surjective homomorphism $\Gal(k(U))\to\pi_1(U,\Bar\eta)$~\cite[I.5.2(b)]{Milne}.

\begin{lemma*}
If the action of $\pi_1(U,\Bar a)$ is transitive on $P_n((\V_n)_{\Bar a})$, then $C_n$ is a connected scheme.
\end{lemma*}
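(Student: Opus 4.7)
The plan is to apply the standard Galois correspondence between finite étale covers of $U$ and finite continuous $\pi_1(U,\bar a)$-sets, for which connected covers correspond exactly to transitive actions on the fiber. Since $C_n$ is only a regular compactification of the étale cover $U_n\to U$, the main issue is first to establish connectedness of $U_n$, and then to pass from $U_n$ to $C_n$.

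First, I would note that by the preceding lemma the fiber of $U_n$ over $\bar a$ is canonically identified with $P_n((\V_n)_{\bar a})$, and that this identification is compatible with the natural action of $\pi_1(U,\bar a)$. Indeed, the $\pi_1$-action on the fiber of $U_n$ is obtained by transporting the monodromy action on the local system $\V_n$ through the construction $V\mapsto P_n(V)$; both sides are computed from the same functor $T\mapsto \V_n(T)^*/\Lambda_n^*$.

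Next, I would invoke the equivalence of categories $\{\text{finite \'etale schemes over }U\}\cong\{\text{finite continuous }\pi_1(U,\bar a)\text{-sets}\}$, under which the functor $T\mapsto \Hom_U(\Spec\kk,T)$ sends a cover to its fiber over $\bar a$. By general Galois theory (the analogue of Example~\ref{I52} for $U$ in place of $\Spec L$), a finite étale cover is connected if and only if the corresponding $\pi_1(U,\bar a)$-action on the fiber is transitive. Combining this with the identification above, the hypothesis of the lemma implies that $U_n$ is connected.

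Finally, $U_n$ is open and dense in its regular projectivisation $C_n$: the latter is obtained by adding finitely many closed points, so $C_n$ is connected as soon as $U_n$ is. Hence $C_n$ is connected, as claimed. There is no serious obstacle here; the only subtlety is checking that the $\pi_1(U,\bar a)$-action used to classify the cover $U_n\to U$ really does coincide with the monodromy action on $P_n((\V_n)_{\bar a})$ referred to in the statement, which is immediate from the representability argument in Section~\ref{constr}.
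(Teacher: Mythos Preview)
Your argument is correct and follows essentially the same idea as the paper's: both use the Galois correspondence (transitive $\pi_1$-sets $\leftrightarrow$ connected finite \'etale covers) to deduce connectedness. The only cosmetic difference is that the paper routes through the generic point, using the surjection $\Gal(k(U))\twoheadrightarrow\pi_1(U,\bar\eta)$ together with Example~\ref{I52} to conclude that the generic fiber of $C_n$ is a single field, while you apply the correspondence directly over $U$ to get $U_n$ connected and then pass to the compactification $C_n$.
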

\begin{proof} The group $\pi_1(U,\aa)$ is isomorphic to the quotient of the Galois group $\Gal(k(U))$. 
By~\ref{I52}, finite field extensions of $k(U)$ correspond to transitive actions of $\Gal(k(U))$ on finite sets. 
Since the action on $P_n((\V_n)_{\aa})$ is transitive, the generic point of $C_n$ is the spectrum of a field; thus $C_n$ is connected.
\end{proof}

\begin{corollary*} In the notation of the section~\ref{family}, if the natural action of $\pi_1(U,\aa)$ 
is transitive on $P_n(H^i_\et(\overline{X}_\aa,\Lambda_n))$, then $C_n$ is a connected scheme.
\end{corollary*}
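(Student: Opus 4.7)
The plan is to deduce this corollary directly from the preceding lemma by identifying the stalk of $\V_n$ at $\Bar a$ with the étale cohomology of the geometric fiber in a way that respects the fundamental-group action. The lemma says that $C_n$ is connected whenever $\pi_1(U,\Bar a)$ acts transitively on $P_n((\V_n)_{\Bar a})$, so once the identification is in place, the hypothesis of the corollary translates literally into the hypothesis of the lemma.

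First I would recall from Section~\ref{family} that, by~\cite[VI.2.5]{Milne}, the proper-smooth base change theorem yields a canonical isomorphism $(\V_n)_{\Bar a}\cong H^i_\et(\XX_{\Bar a},\T_n)$ of $\T_n$-modules. The key point I would then check is that this isomorphism is equivariant for the action of $\pi_1(U,\Bar a)$: the left-hand side carries the monodromy action coming from the locally constant sheaf $\V_n$ on $U$, while the right-hand side carries the action coming from specialisation at $\Bar a$ along étale paths. Equivariance is essentially built into the construction of the monodromy representation of a locally constant sheaf, but it is worth spelling out by using the description of the stalk functor $\V_n\mapsto (\V_n)_{\Bar a}$ as the fibre functor defining $\pi_1(U,\Bar a)$.

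Next, since the projectivisation $P_n$ is a functor on $\T_n$-modules with respect to $\T_n^*$-equivariant maps, an equivariant isomorphism of stalks induces a $\pi_1(U,\Bar a)$-equivariant bijection
$$P_n((\V_n)_{\Bar a})\cong P_n(H^i_\et(\XX_{\Bar a},\T_n)).$$
Transitivity of the action on one side is therefore equivalent to transitivity on the other.

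Finally, invoking the lemma gives that $C_n$ is connected, proving the corollary. The only point that requires any care is the equivariance of the Milne isomorphism; everything else is a formal translation. I do not expect a serious obstacle, since the whole construction of $\V_n$ in Section~\ref{family} is designed so that its monodromy representation is precisely the action of $\pi_1(U,\Bar a)$ on the cohomology of the geometric fibre.
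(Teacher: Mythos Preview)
Your proposal is correct and matches the paper's own reasoning: the corollary is stated without separate proof precisely because it is the lemma of Section~\ref{Pi1} combined with the identification $(\V_n)_{\Bar a}\cong H^i_\et(\XX_{\Bar a},\Lambda_n)$ from Section~\ref{family}. Your only addition is making the $\pi_1(U,\Bar a)$-equivariance of this identification explicit, which the paper leaves implicit.
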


\subsection{The Frobenius action.}\label{Faction} Assume that $k$ is finite. Let $a\in U(k)$. 
The fiber $(\V_n)_\aa$ is representable by an abelian object of the small \'etale site of $\Spec k$. 
By Example~\ref{I52}, this is an abelian group endowed with a continous action of $\Gal(\kk/k)\cong \Hat\ZZ$.
This group is topologically generated by the Frobenius automorphism $F$, thus $(\V_n)_\aa$ is simply a finite abelian group 
plus an action of Frobenius.
%There is a natural Frobenius action on the fiber $(\V_n)_\aa$~\cite[VI.13]{Milne}. 
The following assumption plays a crucial role in this paper:

$(F_{a,n})$ The Frobenius action on $(\V_n)_\aa$ is multiplication by a constant.

In our main example from Section~\ref{family}, we have $\V_n=R_\et^if_{U*}(\Lambda_n)$ and $(\V_n)_\aa\cong H^i_\et(\overline{X}_\aa,\Lambda_n)$.
%Recall that the fiber $X_a$ of the morphism $f:X\to C$ is a smooth variety over $k$.
By the Weil conjectures proved by Deligne, $(F_{a,n})$ for all $n$ is equivalent to the strong supersingularity of $X_a$ in degree $i$.

\begin{lemma*} Assume that there exists a point $a\in U(k)$ such that condition $(F_{a,n})$ is satisfied. Then the following assertions hold.
\begin{enumerate}
\item The fiber of the morphism $C_n\to C_0$ over $a$ is split.
\item The scheme $C_n$ is a smooth curve over $k$.
\item If $C_n$ is connected, then $C_n$ is a geometrically irreducible curve over $k$.
\end{enumerate}
\end{lemma*}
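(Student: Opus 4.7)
The plan is to use condition $(F_{a,n})$ to show that Frobenius acts trivially on the fiber of $C_n$ over $a$; then (1) is immediate, and (3) follows from (1) and (2) via the standard fact that a connected geometrically reduced proper $k$-scheme with a $k$-rational point is geometrically connected. Statement (2) is essentially a general property of the construction and does not really require the hypothesis.

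First I would unpack $(F_{a,n})$. It asserts that Frobenius acts on $V := (\V_n)_{\bar a}$ as multiplication by some $\lambda \in \Lambda_n$. The key observation is $\lambda \in \Lambda_n^*$: Frobenius must preserve the subset $V^*$ of elements of full $\ell$-order, but multiplication by any element of $\ell\Lambda_n$ annihilates $\ell^{n-1}V$ and so sends every element of $V^*$ outside $V^*$. By the lemma of Section~\ref{constr}, the geometric fiber of $C_n$ over $a$ equals $P_n(V) = V^*/\Lambda_n^*$ as a Galois set, and because the $\Lambda_n^*$-action has been quotiented out, multiplication by $\lambda$ descends to the identity on this quotient. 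Hence Frobenius fixes every geometric point of the fiber, which yields (1).

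For (2), the morphism $U_n \to U$ is finite \'etale and $U$ is a smooth curve over $k$, so $U_n$ is smooth. The finitely many points in $C_n \setminus U_n$ are regular by the choice of regular compactification, and since $k$ is finite and hence perfect, regularity at a closed point of a one-dimensional finite-type $k$-scheme is equivalent to smoothness over $k$. Therefore $C_n$ is smooth over $k$.

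For (3), suppose $C_n$ is connected. By (2) it is smooth and proper over $k$, hence geometrically reduced, so $k' := H^0(C_n, \OO_{C_n})$ is a finite field extension of $k$ and $C_n$ is geometrically connected over $k'$. The split fiber from (1) provides a $k$-rational point $\Spec k \to C_n$; composing with the Stein factorization $C_n \to \Spec k'$ yields a $k$-algebra homomorphism $k' \to k$, which forces $k' = k$. Therefore $C_n$ is geometrically connected, and as a smooth curve it is geometrically irreducible. The only step that takes a moment's thought is the verification that $\lambda$ is a unit in $\Lambda_n$; the rest is a direct application of the earlier lemmas.
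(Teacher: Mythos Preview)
Your argument is correct and follows the same outline as the paper's: identify the fiber with $P_n((\V_n)_{\bar a})$, observe that scalar Frobenius becomes trivial after projectivisation, and then use the resulting $k$-point to force geometric connectedness. Your explicit verification that $\lambda\in\Lambda_n^*$ is a welcome addition the paper omits.

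The one point of divergence is (2). You prove smoothness over $k$ directly from ``\'etale over a smooth curve, then regular compactification over a perfect field'', and you correctly observe that this does not use $(F_{a,n})$. The paper, by contrast, invokes the hypothesis for (2) as well: its argument is that the split fiber shows the algebraic closure of $k$ in $k(C_n)$ is $k$, i.e.\ that $C_n$ is a genuine $k$-curve rather than a curve over an intermediate extension (cf.\ the paper's earlier remark that each component is a priori only a curve over the constant field of its function field). In effect the paper folds into (2) the ``$k$-rational point forces $k'=k$'' step that you carry out via Stein factorization in (3). Your separation is cleaner, and your (2) is strictly more general; the content is the same.
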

\begin{proof}
By Lemma~\ref{constr}, the fiber and $P_n((\V_n)_\aa)$ are isomorphic as sets with Frobenius action. Condition $(F_{a,n})$ means that,
 for any point $x\in C_n(\kk)$ of the fiber, we have $F(x)=x$, where $F$ denotes Frobenius action. Thus, $x\in C_n(k)$ is a closed point, and the fiber is split. Moreover, the algebraic closure of $k$ in $k(C_n)$ is equal to $k$; in other words, $C_n$ is a $k$--variety. This proves part $(2)$.
If $C_n$ is a connected $k$--scheme, then $k(C_n)$ is a field such that $\kk\cap k(C_n)=k$ as subfields of an algebraic closure of $k(C_n)$.
We see that the composit of $\kk$ and $k(C_n)$ is isomorphic to $\kk(\CC_n)$, i.e., $\kk(\CC_n)$ is a field, and $C_n$ is geometrically irreducible.
\end{proof}

\begin{corollary*}
In the notation of the section~\ref{family}, if $C_n$ is connected, and there exists $a\in U(k)$ 
such that $X_a$ is strongly supersingular in degree $i$, then $C_n$ is a smooth, geometrically irreducible curve over $k$, and the fiber of the morphism $C_n\to C_0$ over $a$ is split.
\end{corollary*}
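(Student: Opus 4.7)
The plan is to derive the corollary directly from the preceding lemma by verifying that the hypothesis of strong supersingularity implies the condition $(F_{a,n})$ for every $n$. Once $(F_{a,n})$ is in hand, parts $(1)$--$(3)$ of the lemma literally give the three conclusions of the corollary.

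Concretely, I would first recall from Section~\ref{family} that the stalk at $\bar a$ is canonically
$$(\V_n)_{\bar a}\cong H^i_\et(\overline{X}_a,\Lambda_n),$$
and that this is compatible with the Frobenius action coming from $a\in U(k)$. Strong supersingularity in degree $i$ is the assertion that Frobenius acts on the free $\ZZ_\ell$--module $H^i_\et(\overline{X}_a,\ZZ_\ell)$ by the scalar $\pm q^{i/2}$. Reducing modulo $\ell^n$ (using freeness of the $\ZZ_\ell$--cohomology, which is part of the running assumption in Section~\ref{family}), Frobenius then acts on $H^i_\et(\overline{X}_a,\Lambda_n)$ by the same scalar, viewed in $\Lambda_n$. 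This is exactly condition $(F_{a,n})$.

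With $(F_{a,n})$ verified for every $n$, part $(2)$ of the lemma gives smoothness of $C_n$ over $k$, part $(1)$ gives splitness of the fiber over $a$, and, combining the hypothesis that $C_n$ is connected with part $(3)$, we get geometric irreducibility of $C_n$. There is no real obstacle here; the only small point worth checking carefully is that the identification of the Weil--conjecture statement (eigenvalue of Frobenius on $\ell$--adic cohomology) with the mod $\ell^n$ statement uses freeness of $H^i_\et(\overline{X}_a,\ZZ_\ell)$, which is precisely the standing hypothesis of Section~\ref{family}. So the corollary is a clean specialization of the lemma.
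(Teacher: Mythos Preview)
Your proposal is correct and matches the paper's approach: the paper notes just before the lemma that strong supersingularity of $X_a$ in degree $i$ is equivalent to $(F_{a,n})$ for all $n$, and the corollary is then an immediate specialization of parts $(1)$--$(3)$ of the lemma. Your direct reduction modulo $\ell^n$ establishes the needed implication without invoking the Weil conjectures (which the paper cites only for the converse direction), so your argument is if anything slightly more economical.
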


\subsection{The monodromy representation.}\label{Monodromy} Let $a\in \overline U(\kk)$ be a closed point. 
The action of $G^\ge$ on $(\V_n)_{\Bar a}$ induces homomorphisms
$$\rho_n:G^\ge\to \GL_{b}(\ZZ/\ell^n\ZZ),$$
$$\rho:G^\ge\to \GL_{b}(\ZZ_\ell).$$
By definition, the induced action on $P_n((\V_n)_{\aa})$ is isomorphic to the action on the fiber of the morphism $\CC_n\to \CC_0$ over $a$.

We can describe the fundamental group locally. Let $A=\kk[[t]]$ be the ring of formal power series, and let $K=\kk((t))$ be its fraction field. The scheme  $\Spec A$ has the only closed point $\alpha$, and the generic point $\eta:\Spec K\to\Spec A$ given by the inclusion $A\to K$. 
The fundamental group $G_\eta=\pi_1(\Spec K,\overline\eta)$ is isomorphic to $\Gal(K^\sep/K)$. Thus there is an exact sequence:
$$1\to G_\eta^\wild\to G_\eta\to G_\eta^\tame\to 1,$$ where $G_\eta^\wild$ is a $p$-group, and $$G_\eta^\tame\cong\prod_{\ell\neq p}\ZZ_\ell$$ is the Galois group 
of the extension of $K$ generated by $\sqrt[n]{t}$ for all $n$ coprime to $p$. The group $G_\eta^\tame$ has a topological generator $\xi$ such that the action on $\sqrt[n]{t}$ is the multiplication by a primitive $n$-th root of unity.

For any point $y\in C(\kk)$ there is a morphism $\phi_y:\Spec A\to C$ such that $\phi_y(\alpha)=y$, and there is a commutative diagram
$$\begin{CD}
\Spec K@>\phi_\eta>>U\\
@V\eta VV @V  VV\\
\Spec A@> \phi_y>> C
\end{CD}$$

 Let $\rho_y:G_\eta\to \GL_{b}(\ZZ_\ell)$ be the composition of $\rho$ with the induced homomorphism $\Tilde\phi_\eta:G_\eta\to G^\ge$. 
The representation $\rho$ is called \emph{unramified} in $y$ if $\rho_y(G_\eta)=1$, and $\rho$ is called \emph{tame} in $y$ if $\rho_y(G^\wild)=1$.
We denote by $G_y$ the image of $G_\eta$ in $G^\ge$.
Note that $\Tilde\phi_\eta$ (and $G_\eta$ as well) depends on the choice of the point $\aa\in U(\kk)$; in the tame case this is not important, since $G_y$ is commutative.
% if we choose another point, then $\Tilde\phi_\eta$ 
Therefore in the tame case $\rho_y$ is uniquely determined by $M=\rho_y(\xi)$. We say that $M$ is \emph{the monodromy operator}. 
\begin{lemma*}
If $y\in U(\kk)$, then $\rho$ is unramified in $y$.
\end{lemma*}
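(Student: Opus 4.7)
The plan is to reduce the claim to the fact that $\V_n$ is locally constant on $U$ and that $\Spec A$ is strictly henselian with trivial fundamental group. Once the morphism $\phi_y$ is shown to factor through $U$, pulling back $\V_n$ to $\Spec A$ yields a constant sheaf, and consequently the induced $G_\eta$-action on the stalk at $\overline\eta$ is trivial.

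The first step is to verify that $\phi_y\colon \Spec A\to C$ factors through $U$. Since $\Spec A$ has only the two points $\alpha$ and $\eta$, and $\phi_y(\alpha)=y\in U$, the preimage $\phi_y^{-1}(U)$ is a non-empty open subscheme of $\Spec A$ containing the closed point, so it must equal $\Spec A$. Thus $\phi_y$ factors through $U$, and in particular $\phi_\eta=\phi_y\circ\eta$ as maps to $U$.

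The second step is to compute $\phi_y^*\V_n$. By assumption, $\V_n$ is locally constant on the small \'etale site of $U$. Pullback along any morphism preserves local constancy, so $\phi_y^*\V_n$ is a locally constant sheaf of free $\Lambda_n$-modules on $\Spec A$. Now $A=\kk[[t]]$ is a complete discrete valuation ring with algebraically closed residue field, hence strictly henselian; every finite \'etale cover of $\Spec A$ is a finite disjoint union of copies of $\Spec A$, so $\pi_1(\Spec A)=1$. Consequently $\phi_y^*\V_n$ is a constant sheaf, isomorphic to the constant sheaf attached to the stalk $(\V_n)_{\Bar a}$.

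In the final step, further pullback along $\eta\colon\Spec K\to\Spec A$ produces a constant sheaf on $\Spec K$, which means that the corresponding $G_\eta$-module $(\phi_\eta^*\V_n)_{\overline\eta}$ carries the trivial action. By construction this is exactly the representation obtained by composing $\Tilde\phi_\eta\colon G_\eta\to G^\ge$ with $\rho_n$, so $\rho_{n,y}$ is trivial for every $n$. Passing to the inverse limit gives $\rho_y(G_\eta)=1$, i.e.\ $\rho$ is unramified at $y$. The main point requiring care is the factorisation $\phi_y\colon\Spec A\to U$ together with the fact that local constancy plus strict henselisation forces constancy; both are standard but need to be invoked precisely.
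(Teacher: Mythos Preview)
Your proof is correct and follows essentially the same approach as the paper's: both arguments hinge on the factorisation of $\phi_\eta$ through $\Spec A$ (equivalently, that $\phi_y$ lands in $U$ when $y\in U(\kk)$) together with the triviality of $\pi_1(\Spec A)$ for the strictly henselian ring $A=\kk[[t]]$. The paper states this in one line via fundamental groups, while you spell out the equivalent sheaf-theoretic version (locally constant pulls back to constant on $\Spec A$); the content is the same.
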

\begin{proof}
The morphism $\phi_\eta$ factors through $\Spec A$; thus $\rho_y$ factors through $\pi_1(\Spec A,\alpha)$. By~\cite[I.5.2(b)]{Milne}, the last group is trivial. 
\end{proof}

%\begin{corollary*}
%We use notation of the section~\ref{family}.
%If the fiber $X_y$ is smooth, then $\rho$ is unramified in $y$.
%\end{corollary*}

\subsection{}\label{Moperator}
Let $y\in C(\kk)$ be a point such that the fiber $X_y$ is not smooth. Assume that $\rho$ is tame in $y$. 
Fix a point $\Bar a\in U(\kk)$. We are going to compute ramification of $\overline{C}_n$ over $y$ in terms of the monodromy operator $M$. 

\begin{prop*}
There is a bijection between orbits of the action of $G_y$ on $P_n((\V_n)_{\Bar a})$ and points $x\in C_n(\kk)$ over $y$. 
The ramification index of $x$ is equal to the cardinality of the corresponding orbit.
\end{prop*}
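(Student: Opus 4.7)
The plan is to base change the finite morphism $C_n\to C_0$ along $\phi_y:\Spec A\to C_0$ and describe the resulting finite $A$-scheme in two ways: geometrically via the local rings of $C_n$ over $y$, and Galois-theoretically via the monodromy action on $\V_n$.

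For the geometric side, let $B$ be the semilocal ring of $C_n$ at the finite set of points $\{x\mid y\}$. Since $C_n$ is a regular projective curve and $C_n\to C_0$ is finite, $B$ is a semilocal Dedekind $\OO_{C_0,y}$-algebra of finite rank, and completing with respect to the maximal ideal of $\OO_{C_0,y}$ yields
$$B\otimes_{\OO_{C_0,y}}A\;\cong\;\prod_{x\mid y}\widehat{\OO}_{C_n,x}.$$
Each factor is a complete DVR with residue field $\kk$ (because $x\in C_n(\kk)$), so its fraction field $L_x$ satisfies $[L_x:K]=e(x\mid y)$ with no residue extension.

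For the Galois-theoretic side, the generic point $\Spec K\to\Spec A$ lands in $U$ (Lemma~\ref{Pi1}), so
$$\Spec\Bigl(\textstyle\prod_{x\mid y}L_x\Bigr)\;=\;C_n\times_C\Spec K\;=\;U_n\times_U\Spec K.$$
This is a finite \'etale $K$-scheme, and by Grothendieck's Galois correspondence (Example~\ref{I52}) its connected components are in bijection with $G_\eta$-orbits on the geometric generic fiber, with orbit size equal to the degree over $K$ of the corresponding residue field. By the lemma of Section~\ref{constr} the geometric generic fiber is $P_n((\V_n)_{\overline\eta})$. Combining the two descriptions gives a bijection between points $x\in C_n(\kk)$ over $y$ and $G_\eta$-orbits on $P_n((\V_n)_{\overline\eta})$, with $e(x\mid y)$ equal to the cardinality of the corresponding orbit.

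Finally, one must translate $G_\eta$-orbits on $P_n((\V_n)_{\overline\eta})$ into $G_y$-orbits on $P_n((\V_n)_{\aa})$. Since $\kk\subset K$, the composite $G_\eta\to\pi_1(U,\overline\eta)\to\Gal(k)$ is trivial, so the map factors through $G^\ge$ with image $G_y$ by definition. A choice of path from $\overline\eta$ to $\aa$ yields an isomorphism $(\V_n)_{\overline\eta}\cong(\V_n)_{\aa}$ intertwining the two actions, producing a bijection of $G_\eta$-orbits on $P_n((\V_n)_{\overline\eta})$ with $G_y$-orbits on $P_n((\V_n)_{\aa})$ that preserves cardinalities. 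Tameness of $\rho$ in $y$ ensures this matching is canonical up to relabelling: the image of $G_\eta$ in $\GL_b(\ZZ_\ell)$ factors through the abelian quotient $G_\eta^\tame$, so different choices of path yield the same partition into orbits.

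The main obstacle I anticipate is the bookkeeping in the last paragraph: carefully verifying that the path-dependent identification of stalks really intertwines the monodromy actions, so that one obtains a natural, path-independent bijection of orbits with matching cardinalities. The local decomposition and the Galois interpretation of the generic fiber are essentially formal once one uses finiteness of $C_n\to C_0$ and the Galois-theoretic machinery developed in Sections~\ref{constr}--\ref{Pi1}.
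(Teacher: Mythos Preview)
Your argument is correct and rests on the same Galois-theoretic identification as the paper's: the connected components of the finite \'etale $K$-scheme $U_n\times_U\Spec K$ correspond to $G_\eta$-orbits on $P_n((\V_n)_{\overline\eta})$, and the degree of each component over $K$ is the orbit size. The difference is in the packaging. The paper proceeds orbit by orbit: from an orbit $S$ it produces (via tameness) the explicit extension $K_S\cong K[\sqrt[r]{t}]$, normalizes $A$ inside $K_S$ to obtain a DVR $A_S$, and then uses the map $\Spec A_S\to C_n$ to read off a single point $x$ over $y$ with ramification index $r=|S|$. You instead base-change $C_n\to C_0$ to $\Spec A$ once, invoke the standard decomposition $C_n\times_{C_0}\Spec A\cong\coprod_{x\mid y}\Spec\widehat{\OO}_{C_n,x}$, and match the factors with the orbits all at once. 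Your route makes the bijectivity and the equality $e(x\mid y)=[L_x:K]=|S|$ transparent without having to check injectivity or surjectivity of the orbit-to-point assignment separately; it also shows that tameness is not actually needed for the bare statement (only for the canonicality remark and for the paper's explicit description $K_S\cong K[\sqrt[r]{t}]$). One small note: your citation ``Lemma~\ref{Pi1}'' for the claim that $\Spec K\to\Spec A\to C$ lands in $U$ is off---that lemma is about transitivity---but the fact itself is the commutative diagram displayed in Section~\ref{Monodromy}.
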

\begin{proof}
An orbit $S$ of the action of $G_y$ on $P_n((\V_n)_{\Bar a})$ corresponds to a finite extension $K_S$ of $K$. 
Moreover, since the representation is tame, $K_S\cong K[\sqrt[r]{t}]$, where $r=|S|$. Let $A_S$ be the normalization of $A$ in $K_S$. Then there is a morphism $\phi_S:\Spec A_S\to C_n$ such that the diagram 
$$\begin{CD}
\Spec A_S@>\phi_S>>C_n\\
@V VV @V  VV\\
\Spec A@> \phi_a>> C_0
\end{CD}$$
is commutative. It follows that the unique point $\alpha_S\in\Spec A_S$ over $\alpha\in\Spec A$ goes to a point $x$ of $C_n$ over $y$ with ramification index $r$. 
\end{proof}

\section{Families of elliptic curves} 
\subsection{}In this section we assume that $k=\fq$ is a finite field of odd characteristic $p$.
Let $E$ be an elliptic curve over $k$. Denote by $E_m$ the kernel of the multiplication by $\ell^m$ in $E(\kk)$. 
The $\ell$-th Tate module of $A$ is defined by the formula: $$T_\ell(E) = \plim E_m.$$ 
The module $T_\ell(E)$ is a free $\ZZ_\ell$-module of rank $2$.
The Frobenius endomorphism $F$ of $E$ acts on the Tate module by a semisimple linear transformation, which we also denote by $F$. 
The characteristic polynomial
$$
f_E(t) = \det(t-F|T_\ell(E))
$$
is called \emph{the Weil polynomial of $E$}. It is a monic polynomial of degree $2$ with rational integer coefficients independent of the choice of the prime $\ell$. 

The Tate module $T_\ell(E)$ is isomorphic to $$\Hom_{\ZZ_\ell}(H^1_\et(\overline{E},\ZZ_\ell),\ZZ_\ell).$$
Therefore, $E$ is strongly supersingular in degree $1$ if and only if 
$$f_{E}(t)=(t\pm\sqrt{q})^2.$$ In particular, $E$ is supersingular, and $q$ is a square. 

\begin{prop} \label{prop1}
Assume that $q=p^2$, and an elliptic curve $E$ is supersingular. Then the Weil polynomial of $E$ is equal to $f(t)=(t\pm p)^2$ if one of the following conditions is satisfied:
\begin{enumerate}
	\item $p\equiv -1\mod 12$;
	\item $E$ is defined over $\FF_p$, and $p\neq 3$; 
	\item $E[2](k)\cong (\ZZ/2\ZZ)^2$, in other words, $2$-torsion is defined over $k$.
	\end{enumerate}
\end{prop}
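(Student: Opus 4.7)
The plan is to convert $f_E(t)=(t\pm p)^2$ into the numerical statement that the trace $a$ of the $\FF_{p^2}$-Frobenius equals $\pm 2p$, and then to exclude the remaining candidates under each hypothesis. The Hasse bound together with supersingularity ($p\mid a$ and $|a|\le 2p$) restricts $a$ to $\{0,\pm p,\pm 2p\}$, so it is enough to rule out $a\in\{0,\pm p\}$. The organizing input is that $\End(E)\otimes\QQ$ is the quaternion algebra $B_{p,\infty}$ ramified exactly at $p$ and $\infty$, and the Frobenius $\pi\in B_{p,\infty}$ satisfies $\pi^2-a\pi+p^2=0$: the case $a=0$ forces an embedding $\QQ(i)\hookrightarrow B_{p,\infty}$, while $a=\pm p$ forces $\QQ(\sqrt{-3})\hookrightarrow B_{p,\infty}$. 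Such embeddings exist respectively iff $p\not\equiv 1\pmod 4$ and $p\not\equiv 1\pmod 3$, while $a=\pm 2p$ is the central case $\pi=\pm p\in\ZZ$.

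For (2), an elliptic curve over $\FF_p$ with $p\ge 5$ odd has $\FF_p$-trace $a_1$ divisible by $p$ and bounded by $2\sqrt p<p$, hence $a_1=0$; so the $\FF_p$-Frobenius satisfies $\pi_1^2=-p$ in $\End(E)$, and base change gives the $\FF_{p^2}$-Frobenius $\pi=\pi_1^2=-p$ as a central scalar, whence $a=-2p$ and $f_E(t)=(t+p)^2$. For (3), full $k$-rationality of the $2$-torsion forces $4\mid|E(k)|=p^2+1-a$; since $p$ is odd, $p^2+1\equiv 2\pmod 4$, so $a\equiv 2\pmod 4$, and the only element of $\{0,\pm p,\pm 2p\}$ meeting this is $\pm 2p$ (note $\pm p$ is odd and $0\not\equiv 2\pmod 4$). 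Both cases reduce to short congruence checks.

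Case (1) is where I expect the real difficulty. Read literally, $p\equiv -1\pmod{12}$ gives $p\equiv 2\pmod 3$ and $p\equiv 3\pmod 4$, in which case \emph{both} $\QQ(\sqrt{-3})$ and $\QQ(i)$ embed in $B_{p,\infty}$, and Waterhouse's classification of supersingular isogeny classes over $\FF_{p^2}$ actually realizes traces $\pm p$ and $0$ (for instance, suitable sextic twists of $y^2=x^3+1$ with $p=11$ have Frobenius of trace $\pm p$). I therefore suspect the intended hypothesis is $p\equiv 1\pmod{12}$: under this congruence $p$ splits in both $\QQ(i)$ and $\QQ(\sqrt{-3})$, neither imaginary quadratic field embeds in $B_{p,\infty}$, and the general criterion above excludes $a\in\{0,\pm p\}$ at once, forcing $a=\pm 2p$. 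My plan is to present the embedding-based proof under this corrected congruence and flag the apparent sign issue; an alternative reading would be to derive from condition (1) enough extra rationality of $E$ (e.g.\ of its $2$-torsion) to reduce to case (3), but identifying that reduction is the step I would want to nail down before writing out details.
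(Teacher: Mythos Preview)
Your treatment of (2) and (3) is the paper's own argument, almost verbatim: the paper derives (2) from the fact that a supersingular curve over $\FF_p$ (with $p\neq 3$, and $p$ odd throughout the section) has Weil polynomial $t^2+p$, hence trace $-2p$ over $\FF_{p^2}$; and for (3) the paper first notes $f(t)\equiv t^2+1\pmod 2$ (i.e.\ $a$ even) to reduce to $a\in\{0,\pm 2p\}$, and then rules out $a=0$ by the same parity count $|E(k)|=p^2+1\equiv 2\pmod 4$. Your single congruence $a\equiv 2\pmod 4$ does both steps at once, but it is the same idea.

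Regarding (1), you are right and the paper is wrong as stated. The paper's entire proof of (1) is the sentence ``Statement (1) is easy'', with a pointer to Waterhouse's Theorem~4.1. That theorem says, for $q=p^2$, that the supersingular traces occurring are $\pm 2p$ always, $\pm p$ iff $p\not\equiv 1\pmod 3$, and $0$ iff $p\not\equiv 1\pmod 4$. Hence \emph{every} supersingular curve over $\FF_{p^2}$ has $f_E(t)=(t\pm p)^2$ precisely when $p\equiv 1\pmod 3$ and $p\equiv 1\pmod 4$, i.e.\ $p\equiv 1\pmod{12}$. Under $p\equiv -1\pmod{12}$ both obstructions vanish and the cases $a=\pm p$ and $a=0$ genuinely occur (your $p=11$ example is correct). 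So your plan to prove (1) under the corrected congruence $p\equiv 1\pmod{12}$ via the embedding criterion is exactly the Waterhouse argument the paper is invoking; there is no hidden ``reduce to (3)'' step to find---the condition is simply mis-stated, and you have identified the fix.
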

\begin{proof}
We use the Deuring-Waterhouse classification of Weil polynomials~\cite[Theorem 4.1]{Wa}. 
Statement $(1)$ is easy, and $(2)$ follows from the observation that the Weil polynomial of a supersingular elliptic curve over $\FF_p$ is 
equal to $t^2+p$. Conditions of $(3)$ imply that the Weil polynomial $f(t)$ is congruent to $t^2+1$ modulo $2$. 
Thus, either $f(t)=(t\pm p)^2$, or $f(t)=t^2+p^2$. But in the latter case, the number of points on the curve $f(1)=1+p^2\equiv 2\bmod 4$ is not divisible by $4$. 
\end{proof}

\subsection{Example: Legendre family}
In this section we prove that for large $\ell$ the Legendre family gives an optimal tower of algebraic curves over $k=\FF_{p^2}$, where $p>3$.

Let $C=\PP^1$, and $X\to C$ be the desingularisation of the Legendre family given in affine coordinates by the equation
$$y^2=x(x-1)(x-a),$$ where $a$ is a coordinate on $\AA^1\subset \PP^1$. This family has two degenerate fibers of Kodaira type $I_2$ 
(see~\cite[IV.9]{S}) over $0$, $1$, and one fiber of type $I_2^*$ over infinity.
If $\ell$ is large, the monodromy representation is tame for all points $a\in C_0(\kk)$, and the monodromy matrix for such 
degenerations is conjugate to $\pm M$, where
$$M=\left (\begin{array}{c c}
1 & 2\\
0 & 1\\	
\end{array}\right).$$

We call the corresponding tower $C_n$ \emph{the Legendre tower.}
The $j$-invariant is not constant for the Legendre family and, by~\cite[IV.3.2]{S68}, 
for large $\ell$ the natural morphism $\pi_1(U,\Bar a)\to\Aut(T_\ell(X_a))$ is surjective.
By Corollary~\ref{Pi1}, the schemes $C_n$ are connected. 

A smooth fiber over a point $a\in\AA^1(k)$ is supersingular if and only if $a$ is a root of the Hasse polynomial 
$$H(t)=\sum_{i=0}^{\frac{p-1}{2}}\left(\begin{array}{c}
\frac{p-1}{2} \\	
i \\
\end{array}
\right)^2t^i.$$
It is known that $H(t)$ is separable, and all roots of $H(t)$ belong to $k-\{0,1\}$. 
Moreover, the $2$--torsion of a smooth fiber over a point $a$ is the zero of the group law, and three points with coordinates $y=0$, and $x\in \{0,1,a\}$.
By Proposition~\ref{prop1}.(3), if $H(a)=0$, this fiber is a strongly supersingular elliptic curve. 
Since $H(t)$ is separable, there are exactly $\frac{p-1}{2}$ such fibers.
It follows form Corollary~\ref{Faction} that $C_n$ are geometrically irreducible curves over $k$. 
By Lemma~\ref{Faction}.(1), all fibers over roots of $H$ are split; therefore, $$|C_n(k)|\geq \frac{p-1}{2}(\ell+1)\ell^{n-1}.$$ 
Now, we compute the ramification of $C_n$ over $C_0$ using Proposition~\ref{Moperator}. 

\begin{lemma*} Assume $n$ is even. There are three types of orbits of the action of $G_a$ on $P_n((\ZZ/\ell^n\ZZ)^2)$, where $a\in\{0,1,\infty\}:$
\begin{enumerate}
\item one orbit of length $\ell^n$;
\item for each $i\in\{1,\dots, n/2-1\}$ there are $(\ell-1)\ell^{i-1}$ orbits of length $\ell^{n-2i}$;
\item $\ell^{n/2}$ orbits of length one. 
\end{enumerate}
\end{lemma*}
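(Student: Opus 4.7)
The plan is to apply Proposition~\ref{Moperator}, which reduces the lemma to a combinatorial count of orbits of the cyclic group generated by the monodromy operator $M$ on $P_n(\Lambda_n^2)$. By hypothesis this operator is conjugate to $\pm M$ with $M = \begin{pmatrix} 1 & 2 \\ 0 & 1 \end{pmatrix}$, and since $-1 \in \Lambda_n^*$ acts trivially on the projectivisation, one may work with $M$ itself. The crucial observation is the closed form $M^k = \begin{pmatrix} 1 & 2k \\ 0 & 1 \end{pmatrix}$, which reduces the orbit-length of a class $[(x,y)]$ to finding the smallest $k \geq 1$ admitting some $\lambda \in \Lambda_n^*$ with $(\lambda - 1) y = 0$ and $(\lambda - 1) x = 2ky$.

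My strategy is to stratify $P_n(\Lambda_n^2)$ by $j := v_\ell(y) \in \{0, 1, \ldots, n\}$ (with $j = n$ encoding $y = 0$) and to choose canonical representatives in each stratum: normalise $y = 1$ when $j = 0$, and take $x$ a unit and $y = \ell^j$ when $1 \leq j \leq n$ (the former uses that $v \in V^*$ is automatic; the latter uses that $v \in V^*$ forces $x$ to be a unit once $y$ is non-unit). A direct count then gives $\ell^n$ classes in the stratum $j = 0$, $\ell^{n-j-1}(\ell - 1)$ classes in each stratum $1 \leq j \leq n - 1$, and a single class for $j = n$.

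The heart of the argument is an orbit-length analysis carried out stratum by stratum. I expect three regimes: when $y$ is a unit, the relation $(\lambda - 1) y = 0$ forces $\lambda = 1$ and then $2ky = 0$ forces $\ell^n \mid k$ (using that $2 \in \Lambda_n^*$ since $\ell$ is odd), yielding the single orbit of length $\ell^n$ in case~(1); when $0 < j < n$, the first equation pins $\lambda$ down to $1 + \ell^{n-j}\Lambda_n$ and substitution into the second produces $\ell^{n-2j} \mid k$ when $2j < n$ and no further constraint when $2j \geq n$; when $y = 0$ the class is automatically fixed. These give orbit length $\ell^{n-2j}$, $1$, or $1$ respectively.

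Finally, a routine book-keeping step collates the contributions. I expect the strata $1 \leq j \leq n/2 - 1$ to yield exactly $(\ell - 1)\ell^{j-1}$ orbits of length $\ell^{n-2j}$, matching case~(2) with $i = j$, while summing the length-one orbit counts over $n/2 \leq j \leq n$ should produce a geometric series collapsing to $\ell^{n/2}$, matching case~(3). The hypothesis that $n$ is even matters here precisely because it places the threshold $2j = n$ at an integer value of $j$, so the two sub-regimes within $0 < j < n$ meet cleanly with no overlap. I expect the main obstacle to be notational rather than conceptual: coordinating the $\Lambda_n^*$-scaling on representatives with the $M$-action on the quotient, and isolating the boundary case $y = 0$; everything can be cross-checked against the total count $|P_n(\Lambda_n^2)| = (\ell + 1)\ell^{n-1}$.
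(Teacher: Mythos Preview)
Your proposal is correct and follows essentially the same approach as the paper: both stratify $P_n(\Lambda_n^2)$ by the $\ell$-adic valuation of the $v_1$-coefficient (your $j$ is the paper's $i$), normalise representatives within each stratum, and then determine the orbit length in each stratum. The only cosmetic difference is that the paper verifies the orbit structure by exhibiting an explicit power $M^s$ carrying one representative to the next, whereas you compute the period directly from the stabiliser equations $(\lambda-1)y=0$, $(\lambda-1)x=2ky$; these are two sides of the same elementary calculation.
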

\begin{proof}
We asuume that the monodromy matrix is conjugate to $M$; the $-M$ case is similar.
Let $v_0, v_1$ be a basis of $(\ZZ/\ell^n\ZZ)^2$ such that $Mv_0=v_0$, and $Mv_1=v_1+2v_0$.
We denote the class of a vector $v\in(\ZZ/\ell^n\ZZ)^2$ in $P_n((\ZZ/\ell^n\ZZ)^2)$ by $[v]$.
The orbit of type $(1)$ is formed by classes $[v_1+\lambda v_0]$, where $\lambda\in(\ZZ/\ell^n\ZZ)$. 

Let $i\geq 1$, and let $\lambda\in(\ZZ/\ell^{n-i}\ZZ)^*$. Put $$v_{\lambda,i}=v_0+\ell^i\lambda v_1.$$
We claim that $[v_{\lambda_1,i}]$ and $[v_{\lambda_2,i}]$ are in one orbit if and only if $\lambda_1\equiv \lambda_2\mod \ell^i$.
If $n>2i$, we say that an orbit of type $(2)$ is formed by classes of vectors $$v_{\lambda,i}+ \ell^{2i}\mu v_1,$$ where $\mu\in  \ZZ/\ell^{n-2i}\ZZ$.
If $n\leq 2i$, then we define an orbit of type $(3)$ as the class of $v_{\lambda,i}$. Additionally, $[v_0]$ is also of type $(3)$.

To prove the claim choose a natural number $s$ such that
$$2\lambda^2s\equiv -1\mod \ell^{n-2i}.$$
%Since both $-2$ and $\lambda$ are invertible in $\ZZ/\ell^{n-i}\ZZ$ such an $s$ exists.
It is straightforward to check that $$M^s[v_{\lambda,i}]=[v_{\lambda+\ell^i,i}].$$ 
The lemma is proved.
\end{proof}

The Hurwitz formula gives
$$g(C_n)=-(\ell+1)\ell^{n-1}+\frac{3}{2}(\ell^n-1)+\frac{3}{2}\sum_{i=1}^{n/2-1}(\ell-1)\ell^{i-1}(\ell^{n-2i}-1)+1=\frac{1}{2}(\ell^{n-1}(\ell+1)-3(\ell+1)\ell^{n/2-1})+1.$$
We finally obtain
$$\beta(C_\bullet)\geq p-1.$$ It follows from the Drinfeld-Vl\v{a}du\c{t} bound that the Legendre tower is optimal.

\end{document}